\documentclass[12pt,a4paper]{amsart}
\usepackage{graphicx}
\usepackage{latexsym}
\usepackage{amsxtra}
\usepackage{amsmath}
\usepackage{amsfonts}
\usepackage{amssymb}
\usepackage[figuresright]{rotating}

\newtheorem{theorem}{Theorem}[section]
\newtheorem{corollary}[theorem]{Corollary}
\newtheorem{lemma}[theorem]{Lemma}
\newtheorem{proposition}[theorem]{Proposition}

\setlength{\textheight}{22.3cm}
\addtolength{\hoffset}{-1,5cm}
\addtolength{\textwidth}{3cm}
\addtolength{\voffset}{-1,5cm}
\addtolength{\textheight}{1cm}


\title{Quadratic embedding constants of path graphs}

\author{Wojciech M{\l}otkowski}
\address{Instytut Matematyczny,
Uniwersytet Wroc{\l}awski,
Plac~Grunwaldzki~2/4,
50-384 Wroc{\l}aw, Poland}
\email{mlotkow@math.uni.wroc.pl}
\subjclass[2010]{Primary 05C50; Secondary 05C12, 15A15}
\keywords{Positive definite matrix, conditionally negative definite matrix,
distance matrix of a graph, path graph, quadratic embedding constant}

\begin{document}

\begin{abstract}
We characterize positive definiteness for some family of matrices.
As an application, we derive the explicit value of the quadratic embedding constants of the path graphs.
\end{abstract}

\maketitle

\section{Introduction}\label{sec:intro}

Let $G=(V,E)$ be a graph, with $V$ as the set of vertices and $E$ as the set of edges, i.e. two-element subsets of $V$.
We assume that $G$ is \textit{connected}, which means that for every $x,y\in V$ there exists a \textit{walk} connecting $x$ and $y$:
a finite sequence $x_0,x_1,\ldots,x_n\in V$ such that $n\ge0$, $x_0=x, x_n=y$ and $\{x_{i-1},x_i\}\in E$ for $i=1,\ldots,n$.
The distance $d(x,y)$ is defined as the smallest possible length $n$ of such a walk.
For some connected graphs the \textit{distance matrix} $(d(x,y))_{x,y\in V}$ is conditionally negative definite,
equivalently, there exists a function $\phi$ which maps $V$ into a Hilbert space $\mathcal{H}$
and satisfies
\[d(x,y)=\|\phi(x)-\phi(y)\|^{2}\]
for all $x,y\in V$. This motivated the authors of \cite{obata2018} to introduce and study the \textit{quadratic embedding constant},
defined as
\begin{equation}\label{def:qec}
\mathrm{QEC}(G):=\sup\left\{\sum_{x,y\in V}d(x,y)f(x)f(y):f\in\mathcal{F}_{0,1}(V)\right\},
\end{equation}
where $\mathcal{F}_{0,1}(V)$ denotes the set of all finitely supported functions $f:V\to\mathbb{R}$
satisfying $\sum_{x\in V}f(x)=0$ and $\sum_{x\in V}f(x)^2=1$.
In particular, the distance matrix  $(d(x,y))_{x,y\in V}$ is conditionally negative definite if and only if $\mathrm{QEC}(G)\le0$.
Several examples and properties were furnished in \cite{obata2021,obata2020,mlotkowskiobata2020,obata2017,obata2018}.
Applying the min-max theorem and the Perron-Frobenius theorem, one can observe
that if $V$ is finite and $\lambda_1(G)\ge\lambda_2(G)\ge\ldots\ge\lambda_{|V|}(G)$
are the eigenvalues of the distance matrix of $G$, then
\[
\lambda_2(G)\le\mathrm{QEC}(G)<\lambda_1(G).
\]

In this paper we will study finite \textit{path graphs}, i.e. graphs of the form $P_n:=(V,E)$,
where
\[
V:=\{1,2,\ldots,n\},\qquad
E:=\big\{\{1,2\},\{2,3\},\ldots,\{n-1,n\}\big\}.
\]
The eigenvalues $\lambda_i(P_n)$ of the distance matrix of $P_n$ were found in \cite{ruzieh1990}.
In particular,
\[
\lambda_2(P_n)=\left\{\begin{array}{ll}
\displaystyle\frac{-1}{1+\cos(\pi/n)}\phantom{\frac{a}{\frac{a}{\frac{b}{c}}}}&\hbox{if $n$ is even,}\\
\displaystyle\frac{-1}{1-\cos\theta^*}&\hbox{if $n$ is odd,}
\end{array}
\right.
\]
where $\theta^*$ is the maximal solution of the equation:
\[
\tan(\theta/2)\tan(n\theta/2)=-1/n,\qquad
\theta\in(0,\pi).
\]

It was observed in \cite[Proposition~5.4]{mlotkowskiobata2020} that $\mathrm{QEC}(P_n)$ is equal to the minimal $t$
such that the matrix
\begin{equation}\label{matrixtt}
\left[2\min\{i,j\}+t+t\cdot\delta_{i,j}\right]_{i,j=1}^{n-1}
\end{equation}
is positive definite, i.e. all the eigenvalues of this matrix are nonnegative.
This led to inequality: $\mathrm{QEC}(P_n)\le-1/2$, see \cite[Theorem~5.6]{mlotkowskiobata2020}.
Our aim is to provide the exact value:

\begin{theorem}\label{th:pathgraphs}
For $n\ge2$ we have
\begin{equation}
\mathrm{QEC}(P_n)=\frac{-1}{1+\cos(\pi/n)}.
\end{equation}
Consequently, if $n$ is even then $\lambda_2(P_n)=\mathrm{QEC}(P_n)$
and if $n$ is odd then $\lambda_2(P_n)<\mathrm{QEC}(P_n)$.
\end{theorem}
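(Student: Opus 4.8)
The plan is to analyze directly the matrix from \eqref{matrixtt}. Write $A:=\bigl[\min\{i,j\}\bigr]_{i,j=1}^{n-1}$, let $I$ and $J$ be the identity and the all-ones matrix of size $n-1$, and set $M(t):=2A+t(I+J)$. By \cite[Proposition~5.4]{mlotkowskiobata2020}, $\mathrm{QEC}(P_n)$ is the smallest $t$ for which $M(t)$ is positive semidefinite, so I would aim to describe the whole set $\{t:M(t)\succeq 0\}$. (One sees immediately that it is a half-line: $M(t_2)-M(t_1)=(t_2-t_1)(I+J)$ is positive definite for $t_2>t_1$, so the eigenvalues of $M(t)$ increase strictly with $t$.)

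The heart of the argument will be a congruence that simultaneously simplifies all three summands of $M(t)$. Let $L$ be the $(n-1)\times(n-1)$ lower-triangular all-ones matrix, so that $A=LL^{T}$ and $L^{-1}$ is lower bidiagonal with $1$'s on the diagonal and $-1$'s on the first subdiagonal. Short computations give $L^{-1}\mathbf{1}=e_1$, whence $L^{-1}J(L^{-1})^{T}=e_1e_1^{T}$, and show that $L^{-1}(L^{-1})^{T}$ is the tridiagonal matrix with diagonal $(1,2,2,\dots,2)$ and all off-diagonal entries equal to $-1$. I would then observe that
\[
\widetilde M(t):=L^{-1}M(t)(L^{-1})^{T}=2I+t\,L^{-1}(L^{-1})^{T}+t\,e_1e_1^{T}
\]
is exactly the tridiagonal Toeplitz matrix $\mathrm{tridiag}(-t,\,2+2t,\,-t)$ of order $n-1$: the rank-one term $t\,e_1e_1^{T}$ restores the $(1,1)$-entry from $2+t$ to $2+2t$, making the diagonal constant. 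By Sylvester's law of inertia, $M(t)$ is positive semidefinite if and only if $\widetilde M(t)$ is.

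Next I would invoke the classical spectrum of a tridiagonal Toeplitz matrix: the eigenvalues of $\mathrm{tridiag}(-t,2+2t,-t)$ of order $n-1$ are
\[
2+2t-2t\cos\frac{k\pi}{n}=2\left(1+t\Bigl(1-\cos\frac{k\pi}{n}\Bigr)\right),\qquad k=1,\dots,n-1 .
\]
For $k=1,\dots,n-1$ the number $1-\cos(k\pi/n)$ is positive and strictly increasing in $k$, with maximum $1-\cos\frac{(n-1)\pi}{n}=1+\cos\frac{\pi}{n}$. Hence all these eigenvalues are nonnegative precisely when $1+t\bigl(1+\cos(\pi/n)\bigr)\ge 0$, i.e. $t\ge -1/(1+\cos(\pi/n))$, and so $\mathrm{QEC}(P_n)=-1/(1+\cos(\pi/n))$, which is negative, as it must be since $M(0)$ is positive definite.

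For the final statement: if $n$ is even then $-1/(1+\cos(\pi/n))=\lambda_2(P_n)$ by \cite{ruzieh1990}, giving equality. If $n$ is odd, I would argue by contradiction using $\lambda_2(P_n)\le\mathrm{QEC}(P_n)$: equality together with $\lambda_2(P_n)=-1/(1-\cos\theta^{*})$ would force $\cos\theta^{*}=-\cos(\pi/n)=\cos(\pi-\pi/n)$, hence $\theta^{*}=\pi-\pi/n$ since both lie in $(0,\pi)$; but for odd $n$ the number $\theta_0:=\pi-\pi/n$ satisfies $n\theta_0/2=(n-1)\pi/2\in\pi\mathbb{Z}$, so $\tan(\theta_0/2)\tan(n\theta_0/2)=0\neq-1/n$, and $\theta_0$ is not a root of the defining equation. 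Thus $\theta^{*}\neq\theta_0$ and $\lambda_2(P_n)<\mathrm{QEC}(P_n)$. I expect the only genuinely nonroutine point to be spotting the congruence in the second paragraph: recognizing that conjugation by $L^{-1}$ sends $A$, $J$, $I$ to $I$, $e_1e_1^{T}$, and a tridiagonal matrix whose single off-Toeplitz "defect" sits in the corner and is cancelled by the $J$-contribution. Everything after that is the standard Toeplitz eigenvalue formula plus an elementary monotonicity-in-$k$ observation.
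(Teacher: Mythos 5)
Your proof is correct, but it follows a genuinely different route from the paper. The paper reduces the problem (via the same Proposition~5.4 of \cite{mlotkowskiobata2020}) to the positive definiteness of $A_{n-1}(t/2,t/2)$ and then runs its general machinery: a three-term recurrence for $\det A_n(s,t)$ expressed through the polynomials $S_n(a,b;t)$, the explicit trigonometric factorization of $S_n(2,1;t)=W_n(t)$ in Proposition~\ref{propositiontrigonometric}, and a leading-principal-minors argument (Theorem~\ref{theoremfinite}) to convert determinant positivity into positive definiteness. You instead diagonalize the problem by congruence: writing $A=LL^{T}$ with $L$ the lower-triangular all-ones matrix, you observe that $L^{-1}M(t)(L^{-1})^{T}$ is exactly the tridiagonal Toeplitz matrix $\mathrm{tridiag}(-t,2+2t,-t)$ of order $n-1$ (the rank-one term coming from $tJ$ precisely repairs the corner defect), and then read off the spectrum from the classical formula; all your intermediate computations ($L^{-1}\mathbf{1}=e_1$, the form of $L^{-1}(L^{-1})^{T}$, the monotonicity of $1-\cos(k\pi/n)$, and the contradiction argument for odd $n$ via $\tan(n\theta_0/2)=0$) check out. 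Your argument is shorter and more self-contained for this particular theorem, since it needs no polynomial identities and no minor-by-minor analysis; what it gives up is generality: the cancellation that restores the Toeplitz structure is special to $s=t$, whereas the paper's framework handles the full two-parameter family $A_n(s,t)$, yields the determinant formula \eqref{thfiniteformula}, the other three cases of Theorem~\ref{theoremroots}, and the infinite matrix $A_\infty(s,t)$ in Theorem~\ref{theoreminfinite}. One cosmetic point: the paper uses ``positive definite'' to mean all eigenvalues nonnegative, so your ``positive semidefinite'' is the right notion and matches Proposition~5.4 as quoted.
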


As an immediate consequence we obtain the quadratic embedding constant for $\mathbb{N}$ and $\mathbb{Z}$
regarded as infinite path graphs, with edges $\{i,i+1\}$, $i\in\mathbb{N}$ or $i\in\mathbb{Z}$,
c.f. \cite[Theorem~5.7]{mlotkowskiobata2020}.

\begin{corollary}
$\mathrm{QEC}(\mathbb{N})=\mathrm{QEC}(\mathbb{Z})=-1/2$.
\end{corollary}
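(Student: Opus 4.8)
The plan is to derive the corollary from Theorem~\ref{th:pathgraphs} by combining a monotonicity principle for $\mathrm{QEC}$ with the fact that $\mathrm{QEC}(P_n)\to-1/2$.

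First I would record the elementary observation that $\mathrm{QEC}$ is monotone under isometric inclusions: if $G'=(V',E')$ is a subgraph of $G=(V,E)$ with $d_{G'}(x,y)=d_G(x,y)$ for all $x,y\in V'$, then $\mathrm{QEC}(G')\le\mathrm{QEC}(G)$, because every $f\in\mathcal{F}_{0,1}(V')$ extends by zero to some $\widetilde f\in\mathcal{F}_{0,1}(V)$ realising the same value of the quadratic form. Since $P_n$ is (for each $n$) an \emph{isometric} subgraph of $\mathbb{N}$, and $\mathbb{N}$ is an isometric subgraph of $\mathbb{Z}$ --- in all of these graphs the distance between $i$ and $j$ equals $|i-j|$, as one checks from the definition of graph distance --- this gives
\[
\mathrm{QEC}(P_n)\le\mathrm{QEC}(\mathbb{N})\le\mathrm{QEC}(\mathbb{Z}),\qquad n\ge2.
\]

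Next I would pass to the limit. By Theorem~\ref{th:pathgraphs}, $\mathrm{QEC}(P_n)=-1/(1+\cos(\pi/n))$; since $n\mapsto\cos(\pi/n)$ is strictly increasing on $n\ge2$ with limit $1$, the sequence $\mathrm{QEC}(P_n)$ is strictly increasing with limit $-1/2$, so $\sup_{n\ge2}\mathrm{QEC}(P_n)=-1/2$ and $\mathrm{QEC}(P_n)<-1/2$ for every $n$. Feeding this into the displayed inequality yields the lower bounds $\mathrm{QEC}(\mathbb{N})\ge-1/2$ and $\mathrm{QEC}(\mathbb{Z})\ge-1/2$. For the reverse inequality I would use that only finitely supported functions enter the definition of $\mathrm{QEC}$: any $f\in\mathcal{F}_{0,1}(\mathbb{Z})$ is supported in some finite interval $\{-N,\dots,N\}$, which with the metric $|i-j|$ is isometric to $P_{2N+1}$, whence $\sum_{x,y}|x-y|f(x)f(y)\le\mathrm{QEC}(P_{2N+1})\le-1/2$; taking the supremum over $f$ gives $\mathrm{QEC}(\mathbb{Z})\le-1/2$, and the identical argument with intervals $\{1,\dots,N\}\cong P_N$ gives $\mathrm{QEC}(\mathbb{N})\le-1/2$. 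Combining the two bounds yields $\mathrm{QEC}(\mathbb{N})=\mathrm{QEC}(\mathbb{Z})=-1/2$.

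This argument is routine and I do not expect a genuine obstacle; the only points deserving care are checking that a finite interval of $\mathbb{N}$ or $\mathbb{Z}$ really inherits the path-graph distance (so the reduction to $P_n$ is legitimate), tracking the direction of each inequality, and observing that the value $-1/2$ is a supremum that is not attained by any finitely supported function. The same two ingredients --- monotonicity under isometric inclusion and the computation of $\lim_n\mathrm{QEC}(P_n)$ --- also reprove \cite[Theorem~5.7]{mlotkowskiobata2020}.
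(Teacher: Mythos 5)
Your argument is correct and is precisely the routine limiting/monotonicity argument that the paper leaves implicit when it calls the corollary an ``immediate consequence'' of Theorem~\ref{th:pathgraphs} (citing \cite[Theorem~5.7]{mlotkowskiobata2020}). Both directions are handled properly: the lower bound from $\sup_n \mathrm{QEC}(P_n)=-1/2$ via extension by zero along isometric inclusions, and the upper bound from the fact that every finitely supported admissible function on $\mathbb{N}$ or $\mathbb{Z}$ lives on a finite interval isometric to some $P_m$.
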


The paper is organized as follows.
First we examine a family of auxiliary polynomials $S_n(a,b;t)$, $a,b\in\mathbb{R}$, $n\ge0$.
Section~\ref{sec:matrices} is devoted to the study of a family $A_{n}(s,t)$ of matrices,
a two-parameter version of~(\ref{matrixtt}).
We provide formula for $\det A_{n}(s,t)$, $n<\infty$, in terms of polynomials $S_n(a,b;t)$,
and characterize these $s,t\in\mathbb{R}$ for which $A_{n}(s,t)$, $1\le n\le\infty$, is positive definite.
Finally we prove Theorem~\ref{th:pathgraphs}.

\section{A family of polynomials}

Now we are going to study a family of polynomials defined
by the following recurrence: $S_0(a,b;t):=1$, $S_1(a,b;t):=at+b$ and
\begin{equation}\label{eq:defsn}
S_{n}(a,b;t)=(1+2t)S_{n-1}(a,b;t)-t^2 S_{n-2}(a,b;t)
\end{equation}
for $n\ge2$, $a,b,t\in\mathbb{R}$.

\begin{proposition}\label{prop:snformula}
For $n\ge0$ we have $\deg S_n(a,b;t)\le n$ and the coefficient at $t^n$
in $S_{n}(a,b;t)$ is equal to $an-n+1$. Moreover,
\begin{multline}\label{for:snabt}
S_n(a,b;t)=(at+b)\sum_{k=0}^{n-1}\binom{2n-k-1}{k}t^k-\sum_{k=0}^{n-2}\binom{2n-k-3}{k}t^{k+2}\\
=\frac{1}{2^{n+1}\sqrt{1+4t}}\left[\left(2b-1+2(a-1)t+\sqrt{1+4t}\right)\left(1+2t+\sqrt{1+4t}\right)^n\right.\\
\left.-\left(2b-1+2(a-1)t-\sqrt{1+4t}\right)\left(1+2t-\sqrt{1+4t}\right)^n\right].
\end{multline}
The former formula holds for $n\ge1$, while the latter for $n\ge0$ and $t\ne-1/4$.
For $t=-1/4$, $n\ge0$ we have
\begin{equation}\label{for:minus14}
S_{n}(a,b;-1/4)=\frac{1}{4^n}\left(4nb-na-n+1\right).
\end{equation}
\end{proposition}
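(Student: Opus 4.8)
The plan is to extract everything from the observation that, for each fixed $t$, the recurrence~(\ref{eq:defsn}) is a second-order linear recurrence in $n$ with constant coefficients $1+2t$ and $-t^2$, whose characteristic polynomial $x^2-(1+2t)x+t^2$ has roots $r_\pm=\tfrac12\big(1+2t\pm\sqrt{1+4t}\big)$. I would proceed in three steps: (i) prove the first (binomial) formula by induction on $n$; (ii) read off $\deg S_n\le n$ and the coefficient at $t^n$ from it; (iii) solve the recurrence to obtain the closed forms, splitting into the cases $t\ne-1/4$ (distinct roots) and $t=-1/4$ (a double root at $1/4$).

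For step (i) I would isolate the part not depending on $a,b$. Put $g_n(t):=\sum_{k\ge0}\binom{2n-k-1}{k}t^k$, with the convention $\binom{p}{k}=0$ for $k<0$ or $k>p$, so that $g_0=0$, $g_1=1$, $g_2=1+2t$. The key claim is $g_n=(1+2t)g_{n-1}-t^2g_{n-2}$ for $n\ge2$; comparing coefficients of $t^k$ and writing $m=2n-k$, this reduces to the elementary identity
\[
\binom{m-1}{k}=\binom{m-3}{k}+2\binom{m-2}{k-1}-\binom{m-3}{k-2},
\]
which I would check by applying Pascal's rule three times (first $\binom{m-1}{k}=\binom{m-2}{k}+\binom{m-2}{k-1}$, then expand each summand). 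Since any multiple of $\{g_n\}$ by a quantity independent of $n$, as well as the shifted sequence $\{-t^2g_{n-1}\}_n$, again satisfies~(\ref{eq:defsn}) (the latter because $-t^2g_n=(1+2t)(-t^2g_{n-1})-t^2(-t^2g_{n-2})$ by the $g$-recurrence), the sequence $T_n:=(at+b)g_n-t^2g_{n-1}$ satisfies~(\ref{eq:defsn}) for $n\ge3$; checking $T_1=at+b=S_1$ and $T_2=(at+b)(1+2t)-t^2=S_2$ then forces $S_n=T_n$ for all $n\ge1$. Re-indexing gives $-t^2g_{n-1}=-\sum_{k=0}^{n-2}\binom{2n-k-3}{k}t^{k+2}$, which is exactly the asserted formula; the restriction $n\ge1$ is genuine, since at $n=0$ both sums are empty while $S_0=1$.

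Step (ii) is then immediate from the $k=n-1$ and $k=n-2$ terms: $g_n$ has degree $n-1$ with leading coefficient $\binom{n}{n-1}=n$, so $(at+b)g_n$ has degree $\le n$ with $t^n$-coefficient $an$, while $t^2g_{n-1}$ has degree $n$ with $t^n$-coefficient $\binom{n-1}{n-2}=n-1$; hence $\deg S_n\le n$ and the coefficient at $t^n$ equals $an-(n-1)=an-n+1$ (the case $n=0$ being $S_0=1=a\cdot0-0+1$). For step (iii), when $t\ne-1/4$ the roots $r_\pm$ are distinct, so $S_n=C_+r_+^n+C_-r_-^n$; solving $C_++C_-=1$, $C_+r_++C_-r_-=at+b$ with $r_+-r_-=\sqrt{1+4t}$ gives $C_\pm=\big(\pm(2(a-1)t+2b-1)+\sqrt{1+4t}\big)/(2\sqrt{1+4t})$, and substituting $r_\pm^n=2^{-n}(1+2t\pm\sqrt{1+4t})^n$ produces the displayed formula; since its right-hand side is unchanged under $\sqrt{1+4t}\mapsto-\sqrt{1+4t}$ it is a polynomial in $t$, so the identity holds for all $t\ne-1/4$. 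When $t=-1/4$ there is a double root $1/4$, hence $S_n=(C+Dn)4^{-n}$, and $S_0=1$, $S_1=b-a/4$ give $C=1$, $D=4b-a-1$, i.e. formula~(\ref{for:minus14}), which is also the $t\to-1/4$ limit of the general formula.

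The main obstacle, such as it is, lies entirely in step (i): there is no conceptual difficulty, but one must be careful with the summation ranges and verify that the vanishing convention for out-of-range binomial coefficients correctly absorbs the boundary terms when $g_n$, $g_{n-1}$, $g_{n-2}$ are compared; once the three-term Pascal identity above is established, the rest is bookkeeping. Step (iii) is routine constant-coefficient recurrence theory once the roots $r_\pm$ are identified.
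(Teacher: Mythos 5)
Your proposal is correct and follows essentially the same route as the paper: your $g_n$ is the paper's $W_{n-1}$, and both arguments establish the binomial formula by showing that $(at+b)g_n-t^2g_{n-1}$ satisfies the defining recurrence with the right initial values, then obtain the closed form from the characteristic roots $\tfrac12\bigl(1+2t\pm\sqrt{1+4t}\bigr)$. The only cosmetic differences are that you solve for the coefficients $C_\pm$ from the initial conditions (and use the double-root ansatz at $t=-1/4$), whereas the paper verifies the stated expressions directly against the recurrence and evaluates $W_n(-1/4)$ by induction.
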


\begin{proof}
The first statement can be easily proved by induction.
Now define:
\begin{equation}\label{eq:wn}
W_{n}(t):=\sum_{k=0}^{n}\binom{2n-k+1}{k}t^k
\end{equation}
(see $A172431$ in OEIS~\cite{oeis}).
Then $W_{0}(t)=1$, $W_{1}(t)=2t+1$, and one can check that
\[
W_{n}(t)=(1+2t)W_{n-1}(t)-t^2 W_{n-2}(t)
\]
for $n\ge2$, so that $W_{n}(t)=S_{n}(2,1;t)$. Putting $Q_{0}(t):=1$ and
\[
Q_{n}(t):=(at+b)W_{n-1}(t)-t^2 W_{n-2}(t)
\]
for $n\ge1$ ($W_{-1}(t):=0$) we have $Q_{1}(t)=at+b$ and $Q_{n}(t)=(1+2t)Q_{n-1}(t)-t^2 Q_{n-2}(t)$
for $n\ge1$, so $Q_{n}(t)$ coincides with $S_{n}(a,b;t)$ and the first formula in (\ref{for:snabt}) holds.
Moreover, it is easy to verify by induction that $W_n(-1/4)=(n+1)/4^n$, which leads to~(\ref{for:minus14}).

For $n\ge0$, $a,b\in\mathbb{R}$, $t\ne-1/4$ put
\begin{multline*}
\phantom{u}
T_n(t):=\frac{1}{2^{n+1}\sqrt{1+4t}}\left[\left(2b-1+2(a-1)t+\sqrt{1+4t}\right)\left(1+2t+\sqrt{1+4t}\right)^n\right.\\
\left.-\left(2b-1+2(a-1)t-\sqrt{1+4t}\right)\left(1+2t-\sqrt{1+4t}\right)^n\right].
\phantom{u}
\end{multline*}
Then $T_{0}(t)=1$, $T_1(t)=at+b$ and from the identity
\[
\left(1+2t\pm\sqrt{1+4t}\right)^2=2(1+2t)\left(1+2t\pm\sqrt{1+4t}\right)-4t^2
\]
we have
\[
T_{n}(t)=(1+2t)T_{n-1}(t)-t^2 T_{n-2}(t)
\]
for $n\ge2$, which implies that $T_{n}(t)=S_{n}(a,b;t)$ for $n\ge0$, $t\ne-1/4$.
\end{proof}

For later use, we record the following identities:

\begin{proposition}
For $W_n(t)$ defined in (\ref{eq:wn}) we have:
\begin{equation}\label{eq:wnsn}
W_n(t)=S_n(2,1;t)=S_n(1,t+1;t).
\end{equation}
Moreover,
\begin{equation}\label{eq:s21}
W_{n}(t)=\frac{1}{2^{n+1}\sqrt{1+4t}}\left[\left(1+2t+\sqrt{1+4t}\right)^{n+1}
-\left(1+2t-\sqrt{1+4t}\right)^{n+1}\right].
\end{equation}
\end{proposition}

\begin{proof}
The former equality in (\ref{eq:wnsn}) was noted in the previous proof,
the latter, as well as (\ref{eq:s21}), is a consequence of~(\ref{for:snabt}).
\end{proof}

In some particular cases we are able to find the roots of $S_{n}(a,b;t)$.

\begin{proposition}\label{propositiontrigonometric}
For $n\ge1$ we have
\begin{align}
\frac{1}{n+1}S_{n}(2,1;t)&=\prod_{k=1}^{n}\left(t+\frac{1}{2+2\cos\left(\frac{k\pi}{n+1}\right)}\right),\label{trigall}\\
S_{n}(1,1/2;t)&=\prod_{k=1}^{n}\left(t+\frac{1}{2+2\cos\big(\frac{(2k-1)\pi}{2n}\big)}\right),\\
S_{n}(1,1;t)&=\prod_{k=1}^{n}\left(t+\frac{1}{2+2\cos\left(\frac{2k\pi}{2n+1}\right)}\right),\\
\frac{1}{2n+1}S_{n}(3,1;t)&=\prod_{k=1}^{n}\left(t+\frac{1}{2+2\cos\big(\frac{(2k-1)\pi}{2n+1}\big)}\right).
\end{align}
\end{proposition}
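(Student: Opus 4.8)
The plan is to linearize the recurrence~(\ref{eq:defsn}) by a trigonometric substitution in the variable $t$, read off the zeros of $S_n(a,b;t)$ from the resulting closed form, and then compare with the known leading coefficient. First observe that the four pairs in the statement are $(a,b)=(2,1),(1,1/2),(1,1),(3,1)$, and by Proposition~\ref{prop:snformula} the coefficient $an-n+1$ of $t^n$ in $S_n(a,b;t)$ equals, respectively, $n+1,\ 1,\ 1,\ 2n+1$; in particular it is never zero, so $S_n(a,b;t)$ has degree exactly $n$, and it suffices to exhibit $n$ distinct real roots.

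For $t\in(-\infty,-1/4)$ let $\theta=\theta(t)\in(0,\pi)$ be the unique angle with $\cos\theta=1+\frac{1}{2t}$, equivalently $t=\frac{-1}{2-2\cos\theta}$; as $t$ runs over $(-\infty,-1/4)$ the angle $\theta$ runs over all of $(0,\pi)$, and each root appearing in the statement, being of the form $\frac{-1}{2+2\cos\phi}$ with $\phi\in(0,\pi)$, already lies in $(-\infty,-1/4)$. The characteristic roots of~(\ref{eq:defsn}) are then the conjugate pair $t\,e^{\pm i\theta}$ (their product is $t^2$ and their sum is $2t\cos\theta=1+2t$), so the general real solution is $S_n=t^n\big(c_1\cos n\theta+c_2\sin n\theta\big)$; imposing $S_0=1$ and $S_1=at+b$ and using $\frac1t=2\cos\theta-2$ pins down $c_1=1$ and $c_2=\dfrac{(a-2b)+(2b-1)\cos\theta}{\sin\theta}$, giving
\begin{equation*}
S_n(a,b;t)=\frac{t^n}{\sin\theta}\Big(\sin\theta\cos n\theta+\big[(a-2b)+(2b-1)\cos\theta\big]\sin n\theta\Big).
\end{equation*}

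Specializing the bracket $(a-2b)+(2b-1)\cos\theta$ — which is $\cos\theta$, $0$, $\cos\theta-1$, $1+\cos\theta$ for the four pairs — and collapsing with the addition and sum‑to‑product formulas yields
\begin{equation*}
S_n(2,1;t)=\frac{t^n\sin(n+1)\theta}{\sin\theta},\qquad S_n(1,1/2;t)=t^n\cos n\theta,
\end{equation*}
\begin{equation*}
S_n(1,1;t)=\frac{t^n\cos\big((2n+1)\theta/2\big)}{\cos(\theta/2)},\qquad S_n(3,1;t)=\frac{t^n\sin\big((2n+1)\theta/2\big)}{\sin(\theta/2)}.
\end{equation*}
In each case the denominator ($\sin\theta$, $1$, $\cos(\theta/2)$, $\sin(\theta/2)$) is nonzero on $(0,\pi)$, as is $t^n$, so $S_n(a,b;t)$ vanishes exactly at the $n$ distinct $\theta\in(0,\pi)$ killing the numerator: $\theta=\frac{k\pi}{n+1}$, $\frac{(2k-1)\pi}{2n}$, $\frac{(2k-1)\pi}{2n+1}$, $\frac{2k\pi}{2n+1}$ for $k=1,\dots,n$. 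Substituting $\phi=\pi-\theta$ turns $t=\frac{-1}{2-2\cos\theta}$ into $t=\frac{-1}{2+2\cos\phi}$, and as $k$ runs over $1,\dots,n$ the corresponding lists of $\phi$'s become $\frac{k\pi}{n+1}$, $\frac{(2k-1)\pi}{2n}$, $\frac{2k\pi}{2n+1}$, $\frac{(2k-1)\pi}{2n+1}$ (the last two lists swap relative to the $\theta$‑values, since $m\mapsto 2n+1-m$ reverses parity). Since $S_n(a,b;t)$ has degree $n$ and leading coefficient $an-n+1$, it therefore equals $an-n+1$ times the product of $\big(t+\frac{1}{2+2\cos\phi}\big)$ over these $n$ angles $\phi$, which after dividing by $an-n+1$ in the first and fourth cases is precisely the asserted factorization in each of the four cases.

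The only steps requiring more than routine verification are the trigonometric collapse in the third step and the parity bookkeeping in the passage from $\theta$ to $\phi$; the one thing worth checking explicitly is that no claimed root is a spurious $0/0$ of the trigonometric expression, but this is immediate because $\sin\theta$, $\cos(\theta/2)$ and $\sin(\theta/2)$ are all positive for $\theta\in(0,\pi)$. I do not foresee any genuine obstacle.
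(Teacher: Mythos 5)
Your proof is correct and is essentially the paper's own argument in different notation: the paper parametrizes $t=-1/(2+2\cos\beta)$ via the complex closed form (\ref{for:snabt}) and reduces $S_n(a,b;t)=0$ to $\alpha+n\beta=k\pi$ with $\alpha=\arg(w)$, $\beta=\arg(z)$, which is exactly your collapsed numerator $\sin\theta\cos n\theta+[(a-2b)+(2b-1)\cos\theta]\sin n\theta$ after the substitution $\beta=\pi-\theta$ (this angle flip is why you need the parity bookkeeping at the end, whereas the paper reads the roots off directly). Both arguments then conclude identically by counting $n$ distinct roots against the leading coefficient $an-n+1$.
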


\begin{proof}
First we note that in view of the first part of Proposition~\ref{prop:snformula},
if $a\ge1$ then $\deg S_{n}(a,b;t)=n$ and the leading term is~$(an-n+1)t^n$.

Fix $a,b,t\in\mathbb{R}$, with $1+4t<0$, and put
\begin{align*}
w&:=2b-1+2(a-1)t+\sqrt{1+4t},\\
z&:=1+2t+\sqrt{1+4t},
\end{align*}
with $\mathrm{Im}(w)>0$,  $\mathrm{Im}(z)>0$, $\alpha:=\mathrm{arg}(w)$, $\beta:=\mathrm{arg}(z)$, $0<\alpha,\beta<\pi$.
Then
\[
\cos\beta=\frac{1+2t}{-2t}\qquad\hbox{and hence}\qquad t=\frac{-1}{2+2\cos\beta}.
\]
In view of~(\ref{for:snabt}), we have $S_n(a,b;t)=0$ if and only if $w\cdot z^n=\overline{w}\cdot \overline{z}^n$,
equivalently, $\alpha+n\beta=k\pi$ for some $k=1,2,\ldots,n$.
Now we consider four cases.

1. If $a=2$, $b=1$ then $w=z$, $\alpha=\beta$ and therefore $\beta=k\pi/(n+1)$.

2. If $a=1$, $b=1/2$ then $\alpha=\pi/2$ and then $\beta=(2k-1)\pi/(2n)$.

3. If $a=b=1$ then
\[
\cos\alpha=\frac{1}{\sqrt{-4t}}=\sqrt{\frac{1+\cos\beta}{2}}=\cos(\beta/2),
\]
which implies $\alpha=\beta/2$ and $\beta=2k\pi/(2n+1)$.

4. Finally, if $a=3$, $b=1$ then
\begin{multline*}
\phantom{aaa}
\cos\alpha=\frac{1+4t}{\sqrt{4t(1+4t)}}=-\sqrt{\frac{1+4t}{4t}}=-\sqrt{\frac{1-\cos\beta}{2}}\\
=-\sin(\beta/2)=\cos(\pi/2+\beta/2),
\phantom{aaa}
\end{multline*}
therefore $\alpha=\pi/2+\beta/2$ and, consequently, $\beta=(2k-1)\pi/(2n+1)$.
\end{proof}

\begin{lemma}\label{lem:t2}
For $n\ge1$, $a,b\in\mathbb{R}$ we have
\begin{equation}
S_{n}(2,1;t)\cdot S_{n}(a,b;t)-S_{n-1}(2,1;t)\cdot S_{n+1}(a,b;t)=t^{2n}.
\end{equation}
\end{lemma}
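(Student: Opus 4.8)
The plan is to prove the identity by a standard ``Cassini/Catalan-type'' argument: show that the left-hand side $D_n := S_n(2,1;t) S_n(a,b;t) - S_{n-1}(2,1;t) S_{n+1}(a,b;t)$ satisfies a simple first-order recurrence in $n$, and then compute the base case. Writing $W_n := S_n(2,1;t)$ for brevity (as in the excerpt), both $W_n$ and $S_n := S_n(a,b;t)$ satisfy the same three-term recurrence $X_n = (1+2t)X_{n-1} - t^2 X_{n-2}$ from~(\ref{eq:defsn}), so I would substitute $S_{n+1} = (1+2t)S_n - t^2 S_{n-1}$ and $W_n = (1+2t)W_{n-1} - t^2 W_{n-2}$ into $D_n$ and collect terms. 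The cross terms cancel and one is left with $D_n = t^2\big(W_{n-1}S_{n-1} - W_{n-2}S_n\big) = t^2 D_{n-1}$.

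Carrying this out explicitly: expanding,
\[
D_n = \big((1+2t)W_{n-1} - t^2 W_{n-2}\big)S_n - W_{n-1}\big((1+2t)S_n - t^2 S_{n-1}\big)
= -t^2 W_{n-2} S_n + t^2 W_{n-1} S_{n-1} = t^2 D_{n-1},
\]
valid for $n \ge 2$. Hence $D_n = t^{2(n-1)} D_1$ for all $n \ge 1$. It then remains to check the base case $n=1$: using $W_0 = S_0(2,1;t) = 1$, $W_1 = S_1(2,1;t) = 2t+1$, $S_1 = at+b$, and $S_2(a,b;t) = (1+2t)(at+b) - t^2$, one computes
\[
D_1 = W_1 S_1 - W_0 S_2 = (2t+1)(at+b) - \big((1+2t)(at+b) - t^2\big) = t^2,
\]
so $D_1 = t^2$ and therefore $D_n = t^{2(n-1)} \cdot t^2 = t^{2n}$, as claimed.

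There is no real obstacle here; the only point requiring a little care is the bookkeeping for the induction range — the recurrence step needs $n \ge 2$ (so that $W_{n-2}$ is defined via~(\ref{eq:defsn}) rather than the convention $W_{-1} := 0$), and the base case handles $n = 1$ directly, which together cover all $n \ge 1$. One could alternatively verify the identity by substituting the closed form~(\ref{for:snabt}) together with~(\ref{eq:s21}) and simplifying the products of the two conjugate binomial expressions, but the recurrence argument above is shorter and avoids the separate treatment of $t = -1/4$ and $1+4t < 0$.
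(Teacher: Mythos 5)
Your proof is correct, and it takes a genuinely different route from the paper's. The paper proves the identity by plugging in the Binet-type closed form~(\ref{for:snabt}): writing $u_{\pm}:=2b-1+2(a-1)t\pm\sqrt{1+4t}$ and $v_{\pm}:=1+2t\pm\sqrt{1+4t}$, it expands
\[
\bigl(v_{+}^{n+1}-v_{-}^{n+1}\bigr)\bigl(u_{+}v_{+}^{n}-u_{-}v_{-}^{n}\bigr)
-\bigl(v_{+}^{n}-v_{-}^{n}\bigr)\bigl(u_{+}v_{+}^{n+1}-u_{-}v_{-}^{n+1}\bigr)
=(u_{+}-u_{-})(v_{+}-v_{-})(v_{+}v_{-})^{n}=4(1+4t)(4t^{2})^{n},
\]
and divides by the normalizing factors. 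You instead exploit only the fact that $S_n(2,1;t)$ and $S_n(a,b;t)$ satisfy the same three-term recurrence~(\ref{eq:defsn}), so the Casoratian-like quantity $D_n$ contracts by the factor $t^{2}$ (the constant coefficient of the recurrence) at each step, reducing everything to the trivial base case $D_1=t^2$. Your telescoping computation and base case are both correct, and your bookkeeping of the induction range ($n\ge 2$ for the step, $n=1$ checked directly) is fine. What your approach buys: it is purely polynomial, so it needs no square roots, no conjugate-pair manipulation, and no separate treatment of $t=-1/4$ or $1+4t<0$ (the paper's argument, being based on a formula valid only for $t\ne -1/4$, tacitly relies on the fact that a polynomial identity holding off a point holds everywhere). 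What the paper's approach buys is consistency with the rest of its toolkit: the closed form~(\ref{for:snabt}) is already established and reused throughout, so the cross-term cancellation is a one-line consequence of machinery on hand. Either proof is acceptable; yours is arguably the cleaner and more self-contained of the two.
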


\begin{proof}
Putting
\[
u_{\pm}:=2b-1+2(a-1)t\pm\sqrt{1+4t},\qquad
v_{\pm}:=1+2t\pm\sqrt{1+4t}
\]
we have
\begin{multline*}
\phantom{uu}
\left(v_{+}^{n+1}-v_{-}^{n+1}\right)\left(u_{+}v_{+}^{n}-u_{-}v_{-}^{n}\right)
-\left(v_{+}^{n}-v_{-}^{n}\right)\left(u_{+}v_{+}^{n+1}-u_{-}v_{-}^{n+1}\right)\\
=(u_{+}-u_{-})(v_{+}-v_{-})(v_{+} v_{-})^{n}=4(1+4t)\left(4t^2\right)^n,
\phantom{uu}
\end{multline*}
and, by (\ref{for:snabt}), the formula follows.
\end{proof}

Define
\begin{equation}\label{eq:tn}
t_n:=\left\{\begin{array}{ll}
-\infty&\hbox{if $n=1$,}\\
\displaystyle\frac{-1}{2+2\cos(\pi/n)}&\hbox{if $n\ge2$.}
\end{array}
\right.
\end{equation}

\begin{lemma}\label{lemmasigns0}
If $t_n<t<t_{n+1}$ then $S_{n-1}(2,1;t)>0$, $S_{n}(2,1;t)<0$
and if $t>t_{n+1}$ then $S_{n-1}(2,1;t)>0$, $S_{n}(2,1;t)>0$.
\end{lemma}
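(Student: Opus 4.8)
The plan is to use the explicit product formula for $S_n(2,1;t)$ from Proposition~\ref{propositiontrigonometric}, namely
\[
\frac{1}{n+1}S_{n}(2,1;t)=\prod_{k=1}^{n}\left(t-t_{n+1}^{(k)}\right),\qquad
t_{n+1}^{(k)}:=\frac{-1}{2+2\cos(k\pi/(n+1))}.
\]
First I would record that $k\mapsto\cos(k\pi/(n+1))$ is strictly decreasing on $k=1,\dots,n$, hence $k\mapsto 2+2\cos(k\pi/(n+1))$ is strictly decreasing and stays positive (it ranges in $(0,4)$), so the roots $t_{n+1}^{(k)}$ are strictly increasing in $k$ and all negative. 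The largest root is at $k=1$, which is exactly $t_{n+1}=-1/(2+2\cos(\pi/(n+1)))$ as defined in~(\ref{eq:tn}); for $n=1$ the product over $k=1,\dots,n$ for the polynomial $S_0$ is empty, consistent with $t_1=-\infty$.

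Next I would exploit the interlacing of the roots of $S_{n-1}(2,1;t)$ and $S_n(2,1;t)$. The roots of $S_{n-1}(2,1;t)$ are $t_n^{(k)}=-1/(2+2\cos(k\pi/n))$ for $k=1,\dots,n-1$, with largest root $t_n^{(1)}=t_n$. Comparing angles, for each $k$ one checks $k\pi/(n+1)<k\pi/n<(k+1)\pi/(n+1)$, which gives the strict interlacing $t_{n+1}^{(k)}<t_n^{(k)}<t_{n+1}^{(k+1)}$ for $k=1,\dots,n-1$. In particular all $n-1$ roots of $S_{n-1}(2,1;t)$ lie strictly to the left of $t_{n+1}=t_{n+1}^{(1)}$ is false — rather, the largest root $t_n$ of $S_{n-1}$ satisfies $t_{n+1}^{(1)}=t_{n+1}<t_n<t_{n+1}^{(2)}$, so $t_n$ lies strictly between the two smallest roots of $S_n(2,1;t)$.

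Now the sign analysis is a routine consequence. Since $S_{n-1}(2,1;t)$ has positive leading coefficient (it equals $n$ by the first part of Proposition~\ref{prop:snformula}) and its largest root is $t_n$, we get $S_{n-1}(2,1;t)>0$ for all $t>t_n$; in particular on the whole range $t_n<t<t_{n+1}$ and on $t>t_{n+1}$, as claimed. For $S_n(2,1;t)$, which has positive leading coefficient $n+1$ and largest root $t_{n+1}$: if $t>t_{n+1}$ then $S_n(2,1;t)>0$; if $t_n<t<t_{n+1}$, then $t$ is larger than the second-largest root $t_{n+1}^{(2)}$ precisely when... actually one only needs that $t$ lies in the open interval between the largest root $t_{n+1}^{(1)}=t_{n+1}$ and the second largest root $t_{n+1}^{(2)}$, because $t_n\in(t_{n+1}^{(1)},t_{n+1}^{(2)})$ by the interlacing above and hence the whole interval $(t_n,t_{n+1})$ lies in $(t_{n+1}^{(2)},t_{n+1}^{(1)})$; on that interval exactly one factor (the one with root $t_{n+1}$) is negative and all others positive, so $S_n(2,1;t)<0$. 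For the degenerate case $n=1$: $S_0(2,1;t)=1>0$ trivially, and $S_1(2,1;t)=2t+1<0$ exactly for $t<-1/2=t_2$, matching the statement. The main obstacle is simply getting the interlacing inequality $t_{n+1}^{(k)}<t_n^{(k)}<t_{n+1}^{(k+1)}$ and, in particular, $t_n<t_{n+1}^{(2)}$ correct; once that angle comparison is in hand, everything else is bookkeeping on signs of factors in a product with known positive leading coefficient.
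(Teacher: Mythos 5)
Your proposal is correct in substance and follows essentially the same route as the paper: both arguments rest on the product formula (\ref{trigall}) together with the key fact that $t_n$ lies strictly between the largest root $t_{n+1}^{(1)}=t_{n+1}$ and the second-largest root $t_{n+1}^{(2)}$ of $S_n(2,1;\cdot)$, which is exactly the displayed inequality in the paper's proof. You should, however, fix several directional slips: the roots $t_{n+1}^{(k)}$ are strictly \emph{decreasing} in $k$ (since $\alpha\mapsto-1/(2+2\cos\alpha)$ is decreasing on $(0,\pi)$), the interlacing should read $t_{n+1}^{(k+1)}<t_{n}^{(k)}<t_{n+1}^{(k)}$, and in particular $t_{n+1}^{(2)}<t_n<t_{n+1}$ rather than $t_{n+1}<t_n<t_{n+1}^{(2)}$; your final containment $(t_n,t_{n+1})\subseteq\bigl(t_{n+1}^{(2)},t_{n+1}^{(1)}\bigr)$ and the sign count of the factors are nevertheless correct, so the conclusion stands once the inequalities are written in the right order.
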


\begin{proof}
First note that the function $\alpha\mapsto-1/(2+2\cos\alpha)$ is decreasing with $\alpha\in(0,\pi)$.
Therefore the statement is a consequence of (\ref{trigall}), because for $n\ge2$ we have
\[
\frac{-1}{2+2\cos\big(2\pi/(n+1)\big)}<\frac{-1}{2+2\cos\big(\pi/n\big)}<\frac{-1}{2+2\cos\big(\pi/(n+1)\big)}.
\]
\end{proof}

Now we collect properties of polynomials of the form~$S_{n}(1,s+1;t)$, which will be applied in the next section.

\begin{proposition}
For $s\in\mathbb{R}$, $n\ge1$, we have
\begin{multline}\label{le:s1esplus1}
\phantom{uu}
S_{n}(1,s+1;t)=S_{n}(1,1;t)+s\cdot S_{n-1}(2,1;t)\\
=\sum_{k=0}^{n}\binom{2n-k}{k}t^k+s\sum_{k=0}^{n-1}\binom{2n-1-k}{k}t^k\\
=\frac{1}{2^{n+1}\sqrt{1+4t}}
\left[\left(1+2s+\sqrt{1+4t}\right)\left(1+2t+\sqrt{1+4t}\right)^n\right.\\
\left.-\left(1+2s-\sqrt{1+4t}\right)\left(1+2t-\sqrt{1+4t}\right)^n\right].
\phantom{uu}
\end{multline}
The latter formula is valid for $t\ne-1/4$, while
\begin{equation}\label{formuladetminus14}
S_{n}(1,s+1;-1/4)=\frac{4ns+2n+1}{4^n}.
\end{equation}
\end{proposition}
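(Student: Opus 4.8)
The plan is to deduce every assertion of the proposition from the defining recurrence~(\ref{eq:defsn}) together with the formulas already collected in Proposition~\ref{prop:snformula}; the structurally interesting statement is the first equality in~(\ref{le:s1esplus1}), and the remaining two formulas as well as~(\ref{formuladetminus14}) will follow from it by routine expansion or substitution.

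First I would establish $S_n(1,s+1;t)=S_n(1,1;t)+s\,S_{n-1}(2,1;t)$. The cleanest route is to note that, with $t$ held fixed, the map $b\mapsto S_n(1,b;t)$ is affine in $b$: the initial data $S_0=1$ and $S_1=t+b$ are affine in $b$, and~(\ref{eq:defsn}) is linear and $b$-free. Hence the $b$-linear coefficient $D_n:=\partial_b S_n(1,b;t)$ satisfies $D_0=0$, $D_1=1$ and the homogeneous recursion $D_n=(1+2t)D_{n-1}-t^2D_{n-2}$, so comparison of initial data with $W_0=1$, $W_1=1+2t$ gives $D_n=W_{n-1}(t)=S_{n-1}(2,1;t)$ for $n\ge1$ (the last equality by~(\ref{eq:wnsn})). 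Evaluating the affine function $b\mapsto S_n(1,b;t)$ at $b=1$ and adding $s$ times the slope yields the claim. A reader preferring a bare-hands argument may instead simply check that the proposed right-hand side agrees with $S_n(1,s+1;t)$ at $n=0,1$ and satisfies~(\ref{eq:defsn}).

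Next, for the middle line of~(\ref{le:s1esplus1}) it suffices, since $S_{n-1}(2,1;t)=W_{n-1}(t)=\sum_{k=0}^{n-1}\binom{2n-1-k}{k}t^k$ by~(\ref{eq:wn}) and~(\ref{eq:wnsn}), to prove $S_n(1,1;t)=\sum_{k=0}^n\binom{2n-k}{k}t^k$. For this I would invoke the decomposition with $s=t$ and the identity $W_n(t)=S_n(1,t+1;t)$ from~(\ref{eq:wnsn}): this gives $W_n(t)=S_n(1,1;t)+t\,W_{n-1}(t)$, hence $S_n(1,1;t)=W_n(t)-t\,W_{n-1}(t)$. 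Expanding both terms through~(\ref{eq:wn}), shifting the summation index of $t\,W_{n-1}(t)$ by one, and collapsing the difference of consecutive binomial coefficients via Pascal's rule $\binom{2n-k+1}{k}=\binom{2n-k}{k}+\binom{2n-k}{k-1}$ leaves exactly $\sum_{k=0}^n\binom{2n-k}{k}t^k$. Substituting this back into the decomposition gives the middle line. The closed form and the value at $t=-1/4$ then require no new work: setting $(a,b)=(1,s+1)$ in~(\ref{for:snabt}) annihilates the term $2(a-1)t$ and turns $2b-1$ into $1+2s$, which reproduces the last display of~(\ref{le:s1esplus1}) for $t\ne-1/4$, while the same substitution in~(\ref{for:minus14}) gives $4n(s+1)-n-n+1=4ns+2n+1$, which is~(\ref{formuladetminus14}).

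I do not anticipate a genuine obstacle here: the only real content is the affine-in-$b$ observation of the first step, after which everything is expansion or substitution. The points that need a little care are the index bookkeeping in the Pascal computation of the third paragraph — in particular the boundary coefficients at $k=0$, $1$ and $n$, and the handling of empty sums — and, should one take the conceptual route in the first step, stating the affine-dependence claim precisely rather than leaving it implicit.
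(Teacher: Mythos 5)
Your proof is correct and follows essentially the same route as the paper: the paper verifies the first equality by checking that $P_n(t):=S_n(1,1;t)+s\cdot S_{n-1}(2,1;t)$ satisfies the initial data and the recurrence (\ref{eq:defsn}) — precisely the ``bare-hands'' variant you mention, of which your affine-in-$b$ argument is just a repackaging — and then obtains the remaining formulas by substituting $(a,b)=(1,s+1)$ into (\ref{for:snabt}) and (\ref{for:minus14}), exactly as you do. Your Pascal-rule derivation of the coefficient expansion of $S_n(1,1;t)$ is more explicit than the paper's one-line appeal to (\ref{for:snabt}), but it is the same computation in substance.
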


\begin{proof}
The first equality in (\ref{le:s1esplus1}) can be verified by induction: putting
$P_n(t):=S_{n}(1,1;t)+s\cdot S_{n-1}(2,1;t)$ we have $P_0(t)=1$, $P_1(t)=t+1+s$
and $P_{n}(t)=(1+2t)P_{n-1}(t)-t^2 P_{n-2}(t)$ for $n\ge2$, consequently
$P_{n}(t)=S_{n}(1,s+1;t)$ for all $n\ge0$. Now (\ref{le:s1esplus1}) and (\ref{formuladetminus14})
are consequences of (\ref{for:snabt}) and (\ref{for:minus14}).
\end{proof}

\section{A family of matrices}\label{sec:matrices}

Define a family of matrices
\begin{equation}
A_n(s,t):=\left[\min\{i,j\}+s+t\delta_{i,j}\right]_{i,j=1}^{n},
\end{equation}
where $s,t\in\mathbb{R}$, $1\le n\le\infty$.
These matrices seem interesting on their own, for the sake of Theorem~\ref{th:pathgraphs}
we are particularly interested in the case~$s=t$.
We are going to study their determinants and positive definiteness.
Note that if $s_1\le s_2$, $t_1\le t_2$, $n_1\ge n_2$ and $A_{n_1}(s_1,t_1)$ is positive definite then so is $A_{n_2}(s_2,t_2)$.

\begin{theorem}\label{theoremfinite}
For $s,t\in\mathbb{R}$, $n\in\mathbb{N}$, we have
\begin{equation}\label{thfiniteformula}
\det A_{n}(s,t)=S_{n}(1,s+1;t)=S_{n}(1,1;t)+s\cdot S_{n-1}(2,1;t).
\end{equation}

With the notation of (\ref{eq:tn}), the matrix $A_n(s,t)$ is positive definite if and only if
\begin{equation}\label{eq:posdef}
t> t_n \qquad\hbox{and}\qquad
S_{n}(1,s+1;t)\ge0.
\end{equation}
\end{theorem}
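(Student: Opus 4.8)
The plan is to prove the determinant formula first and then use it, together with the sign information already collected about the auxiliary polynomials, to pin down positive definiteness via the principal-minors criterion (Sylvester's criterion).

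For the determinant, I would compute $\det A_n(s,t)$ by a recurrence in $n$. Expanding along the last row or column of $A_n(s,t)=[\min\{i,j\}+s+t\delta_{i,j}]_{i,j=1}^n$ is awkward because the last row is $(1+s,2+s,\ldots,n-1+s,n+s+t)$, so instead I would first perform elementary row/column operations: subtract row $n-1$ from row $n$, then row $n-2$ from row $n-1$, and so on, which turns the $\min\{i,j\}$ part into something nearly lower-triangular and isolates the $t\delta_{i,j}$ contributions. A cleaner route: observe that $[\min\{i,j\}]_{i,j}$ is the Gram-type matrix $LL^{\mathsf T}$ with $L$ lower triangular of all ones, so $A_n(s,t)=LL^{\mathsf T}+s J + tI$ where $J$ is all-ones; one can then either use the matrix determinant lemma for the rank-one update $sJ$ or, more in keeping with the paper's style, set up the recurrence directly. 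The key claim is that $D_n:=\det A_n(s,t)$ satisfies $D_0=1$, $D_1=1+s+t$, and $D_n=(1+2t)D_{n-1}-t^2 D_{n-2}$ for $n\ge 2$; this is exactly the recurrence defining $S_n(1,s+1;t)$ (note $S_1(1,s+1;t)=t+s+1$), so by uniqueness of solutions of a two-term linear recurrence with given initial data, $\det A_n(s,t)=S_n(1,s+1;t)$, and the second equality in \eqref{thfiniteformula} is just \eqref{le:s1esplus1}. Establishing this recurrence is the one genuine computation; I expect it to be the main obstacle, though a routine one, handled by two successive row operations (subtract the penultimate row from the last, and similarly for columns) that reduce the determinant of the $n\times n$ matrix to a combination of the $(n-1)\times(n-1)$ and $(n-2)\times(n-2)$ ones.

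For positive definiteness, I would apply Sylvester's criterion: $A_n(s,t)$ is positive definite if and only if all leading principal minors $\det A_1(s,t),\ldots,\det A_n(s,t)$ are strictly positive, i.e. $S_k(1,s+1;t)>0$ for $k=1,\ldots,n$. So I must show this system of strict inequalities is equivalent to the condition $t>t_n$ together with $S_n(1,s+1;t)\ge 0$. For the forward direction, positive definiteness of $A_n$ forces positive definiteness of the leading block $A_{n-1}$, hence $S_{n-1}(1,s+1;t)>0$; I would argue that, as a function of $t$ with $s$ fixed, the condition $S_{k}(1,s+1;t)>0$ for all $k\le n-1$ already forces $t>t_{n-1}$, and then a boundary/monotonicity argument at $t=t_n$ upgrades this to $t>t_n$ (at $t=t_n$ the matrix $A_n$ is singular or indefinite, so strict positivity of the top minor cannot hold while $t\le t_n$). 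The substantive direction is the converse: assuming $t>t_n$ and $S_n(1,s+1;t)\ge 0$, deduce $S_k(1,s+1;t)>0$ for all $k<n$ as well.

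The engine for the converse is Lemma~\ref{lem:t2}, which with $a=1$, $b=s+1$ reads
\[
S_n(2,1;t)\,S_n(1,s+1;t)-S_{n-1}(2,1;t)\,S_{n+1}(1,s+1;t)=t^{2n}\ge 0,
\]
a "Turán-type" interlacing identity linking consecutive $S_n(1,s+1;t)$ through the known quantities $S_k(2,1;t)$, whose signs on the interval $(t_n,t_{n+1}]$ are given by Lemma~\ref{lemmasigns0} (namely $S_{k-1}(2,1;t)>0$ and $S_k(2,1;t)$ has the sign determined by where $t$ sits relative to $t_{k+1}$). Fix $s$ and suppose $t>t_n$; then $t>t_k$ for all $k\le n$, so by Lemma~\ref{lemmasigns0} all the $S_{k-1}(2,1;t)$ appearing are positive and, in the relevant range, $S_k(2,1;t)>0$ too once $k$ is not the "critical" index. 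Using the identity one shows by downward induction on $k$ from $k=n$ that if $S_{k}(1,s+1;t)\ge 0$ and (for $k<n$) $S_{k+1}(1,s+1;t)>0$, then $S_{k-1}(1,s+1;t)>0$: indeed $S_{k-1}(2,1;t)\,S_{k+1}(1,s+1;t)=S_k(2,1;t)S_k(1,s+1;t)-t^{2k}$ rearranges to express $S_{k-1}(1,s+1;t)$ against positive quantities. Care is needed at the single index where $S_k(2,1;t)$ changes sign and at the base case, and one must also handle $t=-1/4$ separately using \eqref{formuladetminus14} since the closed form in \eqref{le:s1esplus1} degenerates there; apart from that bookkeeping, the identity plus the sign lemma deliver all intermediate minors positive, completing Sylvester's criterion and hence the theorem. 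The $n=\infty$ case, if needed, follows by taking the increasing union: $A_\infty(s,t)$ is positive definite iff every $A_n(s,t)$ is, i.e. iff $t>t_n$ and $S_n(1,s+1;t)\ge 0$ for all $n$, which one can then simplify.
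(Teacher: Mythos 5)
The determinant half of your proposal is essentially the paper's proof: two row/column subtractions yield $D_n=(1+2t)D_{n-1}-t^2D_{n-2}$, and the initial conditions identify $D_n$ with $S_n(1,s+1;t)$. No issues there. For the positive-definiteness half, however, there are three genuine soft spots. First, your opening reduction is not an equivalence: the paper's ``positive definite'' means all eigenvalues \emph{nonnegative}, and the theorem's condition allows $S_n(1,s+1;t)=0$, in which case $\det A_n=0$ and strict Sylvester fails. You must treat the boundary separately (the paper does this by exhibiting $A_n(s,t)$ with $\det=0$ as a limit of matrices with $\det>0$; note also $t=0$, where $t^{2k}=0$ kills the strictness your induction is supposed to produce). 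Second, your displayed inductive step is mis-indexed: the identity $S_{k-1}(2,1;t)S_{k+1}(1,s+1;t)=S_k(2,1;t)S_k(1,s+1;t)-t^{2k}$ does not contain $S_{k-1}(1,s+1;t)$ and so cannot be ``rearranged to express'' it. The correct downward step uses Lemma~\ref{lem:t2} at index $k-1$: for $t>t_n$ one has $S_{k-1}(2,1;t)>0$ and $S_{k-2}(2,1;t)>0$ for all $k\le n$, whence $S_{k-1}(2,1;t)\,S_{k-1}(1,s+1;t)=t^{2k-2}+S_{k-2}(2,1;t)\,S_k(1,s+1;t)\ge t^{2k-2}$. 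With that repair the sufficiency direction goes through and is in substance the same argument as the paper's (which orders the roots $s_k$ of the linear functions $s\mapsto\det A_k(s,t)$ using the same identity).

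The real missing idea is the necessity of $t>t_n$. You assert it via ``a boundary/monotonicity argument at $t=t_n$,'' but the nontrivial claim is that for $t\le t_n$ \emph{no choice of $s$} makes all leading minors nonnegative, and that requires Lemma~\ref{lem:t2} again, not just monotonicity. The paper's version: for $t_{k-1}<t\le t_k$ the minors $\det A_{k-1}$ and $\det A_k$ are linear in $s$ with slopes $S_{k-2}(2,1;t)>0$ and $S_{k-1}(2,1;t)\le 0$, and by (\ref{eq:akbk}) the two lines cross at height $-t^{2k-2}/(a_{k-1}-a_k)<0$, so one of them is negative for every $s$. (Equivalently, at $t=t_n$ the slope $S_{n-1}(2,1;t_n)$ vanishes, so $\det A_n(s,t_n)=S_n(1,1;t_n)=-t_n^{2n-2}/S_{n-2}(2,1;t_n)<0$ independently of $s$; only \emph{then} does the observation $A_n(s,t)=A_n(s,0)+tI$, whose eigenvalues increase with $t$, propagate the failure to all $t\le t_n$.) Without some such computation your forward direction does not close, and as written it borders on circular (``the condition for $k\le n-1$ already forces $t>t_{n-1}$'' is just the statement being proved for $n-1$).
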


\begin{proof}
It is easy to verify (\ref{thfiniteformula}) for $n=1,2$.
Now assume that $n\ge3$ and let $\mathbf{k}_j$ denote the $j$th column of $A_{n}(s,t)$. Then
\[
\det A_n(s,t)=\det(\mathbf{k}_1,\ldots,\mathbf{k}_n)
=\det(\mathbf{k}_1,\ldots,\mathbf{k}_{n-1},\mathbf{k}_{n}-\mathbf{k}_{n-1})
\]
and, denoting the transposition by ``$\mathrm{T}$'', we have
\[
\mathbf{k}_{n}-\mathbf{k}_{n-1}=(0,\ldots,0,-t,1+t)^{\mathrm{T}}.
\]
Expanding the determinant along the last column $\mathbf{k}_{n}-\mathbf{k}_{n-1}$,
we get
\[
\det A_n(s,t)=(1+t)\det A_{n-1}(s,t)+t\det B,
\]
where
\[
B=\left(\mathbf{k}'_1,\ldots,\mathbf{k}'_{n-2},\mathbf{k}'_{n-1}+\eta\right),
\]
$\mathbf{k}'_{j}$ is the $j$th column of $A_{n-1}(s,t)$ and $\eta:=(0,\ldots,0,-t)^{\mathrm{T}}$.
This yields
\[
\det B=\det A_{n-1}(s,t)-t\det A_{n-2}(s,t),
\]
and, consequently,
\[
\det A_n(s,t)=(1+2t)\det A_{n-1}(s,t)-t^2 \det A_{n-2}(s,t),
\]
which completes the proof of~(\ref{thfiniteformula}).

Fix $t'\in\mathbb{R}$ and put
\[
\ell_k(s):=\det A_{k}(s,t'):=b_k+s\cdot a_k.
\]
Then, by Lemma~\ref{lem:t2}, we have
\begin{equation}\label{eq:akbk}
a_k b_{k-1}-a_{k-1} b_k=(t')^{2k-2}.
\end{equation}

With the notation of (\ref{eq:tn}), if $t_{k-1}<t'\le t_k$ then $a_{k-1}>0$, $a_{k}\le0$.
Consequently, the lines $\ell_{k-1},\ell_{k}$ cross at some point $(s',y')$, i.e.
$\ell_{k-1}(s')=\ell_{k}(s')=y'$, and, by (\ref{eq:akbk}),
\[
y'=\frac{a_{k} b_{k-1}-a_{k-1} b_{k}}{a_{k}-a_{k-1}}<0.
\]
This implies, that for every $s\in\mathbb{R}$ we have either $\ell_{k-1}(s)<0$ or $\ell_{k}(s)<0$,
therefore the matrix $A_{n}(s,t')$ is not positive definite for $n\ge k$.

Now, if we assume that $t'>t_n$ then $a_1,\ldots,a_n>0$.
Defining $s_k$ by $\ell_k(s_k)=0$, we have $s_1\le s_2\le\ldots\le s_n$, by~(\ref{eq:akbk}).
Therefore, if $\det A_{n}(s,t')>0$ then $s>s_n$, $\det A_{k}(s,t')>0$ for all $k\le n$,
and hence $A_n(s,t')$ is positive definite. If $\det A_{n}(s,t')=0$
then $A_n(s,t')$ is positive definite as pointwise limit of positive definite matrices.
The proof is complete.
\end{proof}

Note, that in particular,
\begin{equation}
\det A_n(t,t)=S_n(1,t+1;t)=S_{n}(2,1)=W_{n}(t).
\end{equation}

Since $S_n(1,s_0+1;t)$ is the characteristic polynomial of a real symmetric matrix, we have

\begin{corollary}
For every fixed $s_0\in\mathbb{R}$, $n\ge1$, the polynomial $S_{n}(1,s_0+1;t)$ has only real roots.
\end{corollary}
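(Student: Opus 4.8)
The plan is to recognize $S_n(1,s_0+1;t)$ as the characteristic polynomial of a fixed real symmetric matrix and then invoke the spectral theorem. By Theorem~\ref{theoremfinite} we have the identity
\[
S_n(1,s_0+1;t)=\det A_n(s_0,t),
\]
where $A_n(s_0,t)=[\min\{i,j\}+s_0+t\delta_{i,j}]_{i,j=1}^n$. The observation to exploit is that the $t$-dependence of this matrix lives entirely on the diagonal, so that
\[
A_n(s_0,t)=C_n+tI_n,\qquad C_n:=[\min\{i,j\}+s_0]_{i,j=1}^n,
\]
with $C_n$ a \emph{fixed} matrix that does not depend on $t$. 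Since $\min\{i,j\}+s_0=\min\{j,i\}+s_0$ and $s_0\in\mathbb{R}$, the matrix $C_n$ is real and symmetric.

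Next I would rewrite the determinant as a characteristic polynomial:
\[
S_n(1,s_0+1;t)=\det(C_n+tI_n)=\det\bigl(tI_n-(-C_n)\bigr),
\]
which is precisely the characteristic polynomial of the real symmetric matrix $-C_n$, evaluated at $t$. By the spectral theorem $-C_n$ has $n$ real eigenvalues, counted with multiplicity, and these are exactly the roots of $S_n(1,s_0+1;t)$; hence all of its roots are real. As a consistency check on degrees, note that Proposition~\ref{prop:snformula} gives leading coefficient $1\cdot n-n+1=1$, so $S_n(1,s_0+1;t)$ is monic of degree $n$, matching the characteristic polynomial of an $n\times n$ matrix.

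I expect essentially no obstacle here: the whole content of the statement has already been loaded into the determinant formula of Theorem~\ref{theoremfinite}, and the two remaining moves — isolating the diagonal $t$-part of $A_n(s_0,t)$ and recognizing a characteristic polynomial — are routine linear algebra. (Alternatively, the sign and interlacing analysis of the lines $\ell_k(s)=\det A_k(s,t')$ carried out inside the proof of Theorem~\ref{theoremfinite} already constrains the roots of each $\det A_n(s_0,\cdot)$, but the spectral-theorem argument is the most economical.)
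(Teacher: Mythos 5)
Your proof is correct and is exactly the argument the paper intends: the paper's entire justification is the sentence preceding the corollary, namely that $S_n(1,s_0+1;t)=\det A_n(s_0,t)$ is the characteristic polynomial of a real symmetric matrix, which is precisely your decomposition $A_n(s_0,t)=C_n+tI_n$ with $C_n=[\min\{i,j\}+s_0]$ symmetric. You have simply written out the details (including the degree check) that the paper leaves implicit.
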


Now we consider some particular cases.

\begin{theorem}\label{theoremroots}
\phantom{We have the following:}
\begin{itemize}
\item The matrix $A_n(t,t)$ is positive definite if and only if
\[
t\ge\frac{-1}{2+2\cos\big(\pi/(n+1)\big)}.
\]
\item The matrix $A_n(-1/2,t)$ is positive definite if and only if
\[
t\ge\frac{-1}{2+2\cos\big(\pi/(2n)\big)}.
\]
\item The matrix $A_n(0,t)$ is positive definite if and only if
\[
t\ge\frac{-1}{2+2\cos\big(2\pi/(2n+1)\big)}.
\]
\item The matrix $A_n(2t,t)$ is positive definite if and only if
\[
t\ge\frac{-1}{2+2\cos\big(\pi/(2n+1)\big)}.
\]
\end{itemize}
\end{theorem}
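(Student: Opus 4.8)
The plan is to reduce all four statements to Theorem~\ref{theoremfinite} together with the explicit factorizations in Proposition~\ref{propositiontrigonometric}. Recall that Theorem~\ref{theoremfinite} asserts that $A_n(s,t)$ is positive definite exactly when $t>t_n$ and $S_n(1,s+1;t)\ge0$. So for each of the four specializations of $s$ I would first rewrite $S_n(1,s+1;t)$ as one of the polynomials $S_n(2,1;t)$, $S_n(1,1/2;t)$, $S_n(1,1;t)$, $S_n(3,1;t)$ whose roots are listed in Proposition~\ref{propositiontrigonometric}, then read off the largest root $\tau_n$, and finally check that $t\ge\tau_n$ already forces $t>t_n$, so that the criterion of Theorem~\ref{theoremfinite} collapses to $t\ge\tau_n$.

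For the first three cases the identification is immediate: with $s=t$ one has $S_n(1,t+1;t)=S_n(2,1;t)=W_n(t)$ by~(\ref{eq:wnsn}); with $s=-1/2$ one gets $S_n(1,1/2;t)$ directly; with $s=0$ one gets $S_n(1,1;t)$ directly. The case $s=2t$ needs a small extra step. Using the first formula in~(\ref{for:snabt}), which exhibits $S_n(a,b;t)$ as the explicit linear combination $(at+b)\sum_{k}\binom{2n-k-1}{k}t^k-\sum_{k}\binom{2n-k-3}{k}t^{k+2}$, one checks that the choice $(a,b)=(1,2t+1)$ produces the same expression as $(a,b)=(3,1)$, since $(t)\cdot t+(2t+1)=3t\cdot t+1$ as coefficients of $\sum_k\binom{2n-k-1}{k}t^k$; hence $S_n(1,2t+1;t)=S_n(3,1;t)$. (Alternatively this identity follows from the recurrence~(\ref{eq:defsn}) by comparing the two sides at $n=0$ and $n=1$.)

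Next I would locate the largest root of each of these four polynomials. By the first part of Proposition~\ref{prop:snformula} their leading coefficients are $n+1$, $1$, $1$ and $2n+1$ respectively, hence positive; and in each case Proposition~\ref{propositiontrigonometric} exhibits $n$ roots of the form $-1/(2+2\cos\theta_k)$ with $0<\theta_1<\theta_2<\cdots<\theta_n<\pi$, which are distinct because $\alpha\mapsto-1/(2+2\cos\alpha)$ is injective on $(0,\pi)$. Since this map is moreover strictly decreasing on $(0,\pi)$, the largest root corresponds to the smallest argument $\theta_1$, namely $\tau_n=-1/(2+2\cos(\pi/(n+1)))$, $-1/(2+2\cos(\pi/(2n)))$, $-1/(2+2\cos(2\pi/(2n+1)))$, $-1/(2+2\cos(\pi/(2n+1)))$ in the four cases. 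A polynomial with positive leading coefficient and all roots real and distinct is nonnegative precisely for arguments at least its largest root, so $S_n(1,s+1;t)\ge0\iff t\ge\tau_n$.

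It remains to verify that $t\ge\tau_n$ implies $t>t_n=-1/(2+2\cos(\pi/n))$ (with $t_1=-\infty$, where the implication is vacuous). This follows again from the monotonicity of $\alpha\mapsto-1/(2+2\cos\alpha)$: for every $n\ge1$ the angle occurring in $\tau_n$ is strictly less than $\pi/n$, since $\pi/(n+1)<\pi/n$, $\pi/(2n)<\pi/n$, $2\pi/(2n+1)<\pi/n$ (because $2n+1>2n$), and $\pi/(2n+1)<\pi/n$, while all four angles lie in $(0,\pi)$; hence $\tau_n>t_n$. Combining this with Theorem~\ref{theoremfinite} gives, in each case, that $A_n(\cdot,t)$ is positive definite if and only if $t\ge\tau_n$, which is the assertion. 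The only genuinely non-routine point is the identity $S_n(1,2t+1;t)=S_n(3,1;t)$ underlying the fourth bullet; the rest is bookkeeping with leading coefficients, the sign of a real-rooted polynomial to the right of its largest root, and the elementary monotonicity of $-1/(2+2\cos\alpha)$.
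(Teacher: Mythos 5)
Your overall strategy is the same as the paper's: combine Theorem~\ref{theoremfinite} with the factorizations of Proposition~\ref{propositiontrigonometric}, using the identity $S_n(1,2t+1;t)=S_n(3,1;t)$ for the fourth bullet. However, there is a genuine error in the step where you dispose of the condition $S_n(1,s+1;t)\ge0$. You assert that ``a polynomial with positive leading coefficient and all roots real and distinct is nonnegative precisely for arguments at least its largest root.'' This is false in degree $\ge 2$: such a polynomial alternates sign between consecutive roots, so it is also nonnegative on the closed interval between the third- and second-largest roots, and so on. For instance $S_2(2,1;t)=3\left(t+1\right)\left(t+\tfrac{1}{3}\right)$ takes the value $5>0$ at $t=-2$, even though its largest root is $-\tfrac{1}{3}$. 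Consequently the biconditional $S_n(1,s+1;t)\ge0\iff t\ge\tau_n$ you rely on does not hold, and the forward implication of the theorem (positive definiteness forces $t\ge\tau_n$) is not established by your argument: you have only shown $t\ge\tau_n\Rightarrow t>t_n$, which yields the reverse implication.

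The repair is exactly the half of the angle comparison you omitted, and it is what the paper's proof records. Besides checking that the largest root $\tau_n$ exceeds $t_n$ (smallest angle $<\pi/n$), one must also check that the \emph{second-largest} root of each polynomial is strictly less than $t_n$, i.e.\ that the second-smallest angle in each factorization, namely $2\pi/(n+1)$, $3\pi/(2n)$, $4\pi/(2n+1)$ and $3\pi/(2n+1)$ respectively, is strictly greater than $\pi/n$ for $n>1$ (the case $n=1$ being checked directly). By the monotonicity of $\alpha\mapsto -1/(2+2\cos\alpha)$ this gives
\[
\frac{-1}{2+2\cos\big(2\pi/(n+1)\big)}<t_n<\frac{-1}{2+2\cos\big(\pi/(n+1)\big)}
\]
and its three analogues. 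With both inequalities in place, the interval $(t_n,\tau_n)$ lies strictly between the two largest roots, where $S_n(1,s+1;\cdot)<0$; combined with the requirement $t>t_n$ from Theorem~\ref{theoremfinite}, this yields the correct equivalence with $t\ge\tau_n$. The rest of your argument --- the identification of the four specializations, the leading coefficients $an-n+1$, and the monotonicity considerations --- is correct and matches the paper.
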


\begin{proof}
First note that by (\ref{for:snabt}) we have $S_n(1,2t+1;t)=S_n(3,1;t)$.
It is easy to check the statements for $n=1$.
For $0<k<m$ put
\[
t^{k}_{m}:=\frac{-1}{2+2\cos(k\pi/m)}.
\]
Then, for $n>1$, we have
$t^{2}_{n+1}<t^{1}_{n}<t^{1}_{n+1}$,
$t^{3}_{2n}< t^{1}_{n}<t^{1}_{2n}$,
$t^{4}_{2n+1}< t^{1}_{n}<t^{2}_{2n+1}$,
$t^{3}_{2n+1}< t^{1}_{n}<t^{1}_{2n+1}$, and
the statements follow from Theorem~\ref{theoremfinite} and
Proposition~\ref{propositiontrigonometric}.
\end{proof}

Now we are able to describe the case $n=\infty$.

\begin{theorem}\label{theoreminfinite}
The infinite matrix $A_{\infty}(s,t)$ is positive definite if and only if
\[
1+4t\ge0\qquad\hbox{and}\qquad 1+2s+\sqrt{1+4t}\ge0.
\]
\end{theorem}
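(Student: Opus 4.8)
The plan is to reduce the infinite case to the finite case already settled in Theorem~\ref{theoremfinite}, using the monotonicity remark that precedes that theorem: $A_\infty(s,t)$ is positive definite if and only if $A_n(s,t)$ is positive definite for every finite $n$. By \eqref{eq:posdef} this amounts to requiring, for all $n$, that $t>t_n$ and $S_n(1,s+1;t)\ge0$. So I would analyze these two conditions as $n\to\infty$.

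First, the condition $t>t_n$ for all $n$. Since $t_n=-1/(2+2\cos(\pi/n))$ increases to $t_\infty:=-1/4$ as $n\to\infty$ (because $\cos(\pi/n)\to1$), the requirement $t>t_n$ for every finite $n$ is equivalent to $t\ge-1/4$, i.e. $1+4t\ge0$. One should note the strictness carefully: each individual $t_n<-1/4$, so $t=-1/4$ does satisfy $t>t_n$ for all $n$; hence the limiting inequality is the non-strict $t\ge-1/4$.

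Next, assuming $1+4t\ge0$, I would examine $\liminf_n S_n(1,s+1;t)$ using the closed form \eqref{le:s1esplus1}. For $t>-1/4$ write $\sqrt{1+4t}=\rho>0$ and $v_\pm=1+2t\pm\rho$; then $v_+>0$, $|v_-|<v_+$ (indeed $v_+v_-=4t^2\ge0$ and $v_++v_-=2(1+2t)>0$ when $t>-1/4$... more carefully, $v_->0$ iff $4t^2>0$, and in any case $|v_-|\le v_+$), so the sign of $S_n(1,s+1;t)=\frac{1}{2^{n+1}\rho}\big[(1+2s+\rho)v_+^n-(1+2s-\rho)v_-^n\big]$ is governed for large $n$ by the dominant term $(1+2s+\rho)v_+^n$. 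Thus $S_n(1,s+1;t)\ge0$ for all large $n$ forces $1+2s+\rho\ge0$, i.e. $1+2s+\sqrt{1+4t}\ge0$; conversely if $1+2s+\sqrt{1+4t}\ge0$ one checks $1+2s-\sqrt{1+4t}\le 1+2s+\sqrt{1+4t}$ has the same or smaller... here I must verify that $(1+2s+\rho)v_+^n-(1+2s-\rho)v_-^n\ge0$ for \emph{every} $n\ge1$, not just large $n$ — this is where a short monotonicity or induction argument on the sequence $\ell_n(s)=S_n(1,s+1;t)$ is needed, exploiting $a_n>0$ (valid since $t>t_n$) and the ordering $s_1\le s_2\le\cdots$ of the roots established in the proof of Theorem~\ref{theoremfinite}, so that $1+2s+\rho\ge0$ (equivalently $s\ge s_\infty:=\lim s_n$) yields $s\ge s_n$ hence $\ell_n(s)\ge0$ for all $n$. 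For the boundary value $t=-1/4$ I would instead use \eqref{formuladetminus14}: $S_n(1,s+1;-1/4)=(4ns+2n+1)/4^n\ge0$ for all $n$ iff $4s+2\ge0$ iff $s\ge-1/2$, which is exactly $1+2s+\sqrt{1+4t}=1+2s\ge0$ at $t=-1/4$; so the two stated inequalities cover this case consistently.

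The main obstacle is pinning down the edge cases: making sure the non-strict inequalities in the conclusion are correct (the $t=-1/4$ fibre, and the situation where $1+2s+\sqrt{1+4t}=0$ exactly, which should still give positive semidefinite-hence-acceptable matrices as pointwise limits, just as in the finite proof), and confirming that "$S_n\ge0$ for all large $n$" upgrades to "for all $n$" — this needs the root-ordering $s_1\le s_2\le\cdots\le s_n$ from Theorem~\ref{theoremfinite} together with $s_n\to s_\infty$ where $s_\infty$ is exactly the threshold $-(1+\sqrt{1+4t})/2$. Once those monotonicity facts are in hand, assembling the equivalence is routine. I would close by remarking that the Corollary then follows: taking $s=t$ gives $\mathrm{QEC}$-type threshold $1+2t+\sqrt{1+4t}\ge0$, i.e. $(\, \sqrt{1+4t}\,)$ combined with $1+4t\ge0$ forces $t\ge-1/2$... but that computation belongs to the proof of Theorem~\ref{th:pathgraphs}, not here.
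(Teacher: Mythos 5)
Your proposal is correct and follows essentially the same route as the paper: reduce to the finite criterion of Theorem~\ref{theoremfinite}, observe that $t>t_n$ for all $n$ is equivalent to $t\ge-1/4$, handle $t=-1/4$ via (\ref{formuladetminus14}), and for $t>-1/4$ read off the sign of $S_n(1,s+1;t)$ from the closed form in (\ref{le:s1esplus1}), where the $(1+2s+\sqrt{1+4t})\,v_+^n$ term dominates as $n\to\infty$. The only cosmetic difference is that for sufficiency you invoke the root ordering $s_1\le s_2\le\cdots$ from the finite proof, whereas the paper gets the same conclusion directly from the explicit formulas; both work.
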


\begin{proof}
If $1+4t\ge0$ and $1+2s+\sqrt{1+4t}\ge0$ then all the matrices $A_n(s,t)$, $1\le n<\infty$, are positive definite
by (\ref{thfiniteformula}), (\ref{le:s1esplus1})
and (\ref{formuladetminus14}), hence so is~$A_{\infty}(s,t)$.

Now assume that $A_{\infty}(s,t)$ is positive definite.
Then $t>-1/\big(2+2\cos(\pi/n)\big)$ for every $n\in\mathbb{N}$, which implies $t\ge-1/4$.
If $t=-1/4$ then $s\ge-1/2$ in view of (\ref{formuladetminus14}).
If $1+4t>0$ then, putting
\[
q:=\frac{1+2t-\sqrt{1+4t}}{1+2t+\sqrt{1+4t}},
\]
we have $0<q<1$ and, in view of the last formula in (\ref{le:s1esplus1}), the inequality
\[
1+2s+\sqrt{1+4t}\ge\left(1-2s+\sqrt{1+4t}\right)q^n
\]
holds for every $n\ge1$. This implies that $1+2s+\sqrt{1+4t}\ge0$.
\end{proof}

\begin{figure}[htbp]
 \centering
  \includegraphics[width=100mm]{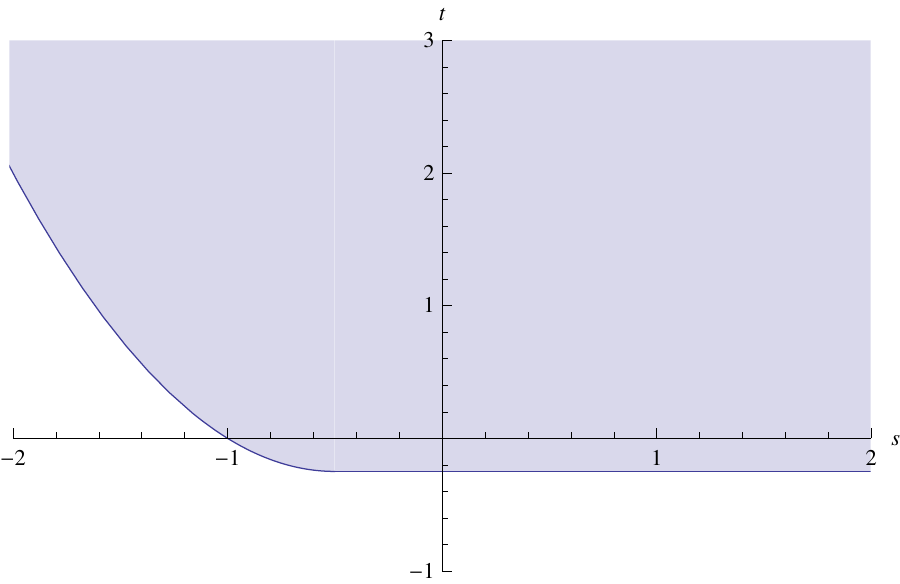}
 \caption{The range of positive definiteness of $A_{\infty}(s,t)$}
 \label{fig:one}
\end{figure}

For example, the following matrix is positive definite:
\[
4A_{\infty}(-1/2,-1/4)=
\left(
\begin{array}{cccccc}
1&2&2&2&2&\ldots\\
2&5&6&6&6&\ldots\\
2&6&9&10&10&\ldots\\
2&6&10&13&14&\ldots\\
2&6&10&14&17&\ldots\\
\vdots&\vdots&\vdots&\vdots&\vdots&\ddots
\end{array}
\right).
\]

\section{Path graphs}\label{sec:proofs}

Now we are ready to prove the main result of this paper.

\begin{proof}[Proof of Theorem~\ref{th:pathgraphs}]
In view of \cite[Proposition~5.4]{mlotkowskiobata2020}, $\mathrm{QEC}(P_n)$
is equal to the minimal~$t$ such that $A_{n-1}(t/2,t/2)$ is positive definite.
It remains to apply the first part of Theorem~\ref{theoremroots}.
\end{proof}

Now we indicate explicitly a vector $\mathbf{x}_n$ on the vertices of $P_n$ for which the supremum
in (\ref{def:qec}) is attained.
Put
\begin{equation}\label{eq:eigenvector}
\mathbf{x}_{n}:=\left(x_{n,1},x_{n,2},\ldots,x_{n,n}\right),\quad\hbox{with}\quad
x_{n,i}:=(-1)^{i}\sin\left(\frac{2i-1}{2n}\pi\right).
\end{equation}
If $n$ is even then $\mathbf{x}_{n}$ is the eigenvector corresponding to the second eigenvalue $\lambda_2(P_n)$,
see~\cite{ruzieh1990}.

\begin{proposition}
For $n\ge2$ we have
\begin{equation}\label{sumx}
\sum_{i=1}^{n} x_{n,i}=0,\qquad
\sum_{i=1}^{n}x_{n,i}^2=\frac{n}{2}
\end{equation}
and
\begin{equation}\label{sumxx}
\sum_{i,j=1}^{n}|i-j| x_{n,i}\cdot x_{n,j}=\frac{-1}{1+\cos(\pi/n)}\cdot\frac{n}{2}.
\end{equation}
\end{proposition}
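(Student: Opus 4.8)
\emph{Proof proposal.} The plan is to reduce all three identities to a single closed form for the partial sums of $\mathbf{x}_n$. Set $g_0:=0$ and $g_k:=\sum_{i=1}^{k}x_{n,i}$ for $1\le k\le n$. First I would show, by induction on $k$, that $g_k=(-1)^k\sin(k\pi/n)\big/\big(2\cos(\pi/(2n))\big)$ for $0\le k\le n$. The cases $k=0,1$ are immediate from (\ref{eq:eigenvector}) (for $k=1$ one uses $\sin(\pi/n)=2\sin(\pi/(2n))\cos(\pi/(2n))$), and the inductive step, after cancelling the sign $(-1)^k$, is exactly the product-to-sum identity $\sin(k\pi/n)+\sin((k-1)\pi/n)=2\cos(\pi/(2n))\sin\big((2k-1)\pi/(2n)\big)$. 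Taking $k=n$ gives $g_n=0$, which is the first equality in (\ref{sumx}).

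For the second equality in (\ref{sumx}) I would use $\sin^2\theta=(1-\cos 2\theta)/2$ to write $\sum_i x_{n,i}^2=\frac n2-\frac12\sum_{i=1}^{n}\cos\big((2i-1)\pi/n\big)$, and observe that the trigonometric sum vanishes, being the real part of a geometric sum running over all $n$th roots of unity (here $n\ge2$). For (\ref{sumxx}) the key step is the standard ``cut'' expansion: $|i-j|$ equals the number of $k\in\{1,\dots,n-1\}$ with $\min\{i,j\}\le k<\max\{i,j\}$, so that $\sum_{i,j}|i-j|\,x_{n,i}x_{n,j}=2\sum_{k=1}^{n-1}g_k(g_n-g_k)=-2\sum_{k=1}^{n-1}g_k^2$ since $g_n=0$. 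Substituting the formula for $g_k$ and using $\sum_{k=1}^{n-1}\sin^2(k\pi/n)=n/2$ (the same root-of-unity cancellation) turns the right-hand side into $-n\big/\big(4\cos^2(\pi/(2n))\big)$, which equals $\frac{-1}{1+\cos(\pi/n)}\cdot\frac n2$ by the half-angle identity $2\cos^2(\pi/(2n))=1+\cos(\pi/n)$.

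The whole computation is elementary; the only genuinely useful ingredients are the closed form for the partial sums $g_k$ and the cut expansion of $|i-j|$, and the one place to be careful is tracking the alternating sign $(-1)^k$ through the induction. (For even $n$ one could instead deduce (\ref{sumxx}) from the fact, recorded just before the proposition, that $\mathbf{x}_n$ is the $\lambda_2(P_n)$-eigenvector of the distance matrix, combined with (\ref{sumx}); but the argument above works uniformly in $n$.)
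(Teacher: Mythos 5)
Your proposal is correct, and for the main identity (\ref{sumxx}) it takes a genuinely different route from the paper. The paper groups the double sum by the difference $k=|i-j|$, evaluates $\sum_{i=1}^{n-k}x_{n,i}x_{n,i+k}$ by a product-to-sum identity, and then has to sum $\sum_{k}(-1)^kk(n-k)\cos(k\pi/n)$ and $\sum_k(-1)^kk\sin(k\pi/n)$ via closed forms for $\sum kq^k$ and $\sum k(n-k)q^k$ at $q=-e^{\pi\mathbf{i}/n}$ --- a fairly heavy computation. You instead use the cut decomposition of the path metric, $|i-j|=\#\{k:\min\{i,j\}\le k<\max\{i,j\}\}$, which together with $g_n=0$ collapses the quadratic form to $-2\sum_{k=1}^{n-1}g_k^2$; the closed form $g_k=(-1)^k\sin(k\pi/n)/(2\cos(\pi/(2n)))$ (your sum-to-product induction checks out) and the elementary $\sum_{k=1}^{n-1}\sin^2(k\pi/n)=n/2$ then finish it. I verified the endpoint cases too: $k=n$ gives the first identity in (\ref{sumx}) for free, and the half-angle identity converts $-n/(4\cos^2(\pi/(2n)))$ into the stated right-hand side. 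Your argument is shorter and more conceptual: the cut expansion is exactly the mechanism by which the path distance is an $\ell^1$-type (conditionally negative definite) metric, so it makes transparent why the answer is a negative multiple of a sum of squares, whereas the paper's computation buys a closed form for each individual correlation $\sum_i x_{n,i}x_{n,i+k}$, which is more information than the proposition needs. The proofs of (\ref{sumx}) are essentially the same root-of-unity cancellations in both treatments.
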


\begin{proof}
Denoting by ``$\mathbf{i}$" the imaginary unit, we have
\[
\sum_{i=1}^{n}(-1)^i \exp\left(\frac{2i-1}{2n}\pi\mathbf{i}\right)
=-\exp\left(\pi\mathbf{i}/(2n)\right)\frac{1+(-1)^{n}}{1+\exp\left(\pi\mathbf{i}/n\right)}
=\frac{-1-(-1)^{n}}{2\cos\big(\pi/(2n)\big)}.
\]
Taking the imaginary part we get the first equation in (\ref{sumx}).
Similarly, since
\[
\sum_{i=1}^{n}\exp\left(\frac{(2i-1)\pi}{n}\mathbf{i}\right)=0,
\]
we get
\[
2\sum_{i=1}^{n}\sin^2\left(\frac{(2i-1)\pi}{2n}\right)
=\sum_{i=1}^{n}\left[1-\cos\left(\frac{(2i-1)\pi}{n}\right)\right]=n,
\]
which completes the proof of (\ref{sumx}).

Now we will prove (\ref{sumxx}). For $1\le k< n$ we have
\begin{multline*}
\phantom{uuu}
2\sum_{i=1}^{n-k}\sin\left(\frac{2i-1}{2n}\pi\right)\sin\left(\frac{2i+2k-1}{2n}\pi\right)\\
=\sum_{i=1}^{n-k}\left[\cos(k\pi/n)-\cos\big((2i+k-1)\pi/n\big)\right]\\
=(n-k)\cos(k\pi/n)+\frac{\sin(k\pi/n)}{\sin(\pi/n)}.
\phantom{uuu}
\end{multline*}
Therefore
\begin{multline}\label{eq:suma}
\phantom{uuu}
\sum_{i,j=1}^{n}|i-j|x_{n,i}\cdot x_{n,j}
=2\sum_{k=1}^{n-1}k\sum_{i=1}^{n-k}x_{n,i}\cdot x_{n,i+k}\\
=\sum_{k=1}^{n-1}(-1)^k k(n-k)\cos(k\pi/n)+\sum_{k=1}^{n-1}(-1)^{k}\frac{k\sin(k\pi/n)}{\sin(\pi/n)}.
\phantom{uuu}
\end{multline}
Now applying elementary formulas:
\begin{align*}
\sum_{k=1}^{n-1} kq^k&=\frac{q(1-q^n)-nq^n(1-q)}{(1-q)^2},\\
\sum_{k=1}^{n-1} k(n-k)q^k&=\frac{nq(1-q)(1+q^n)-q(1+q)(1-q^n)}{(1-q)^3}
\end{align*}
to $q:=-\exp(\pi\mathbf{i}/n)$, so that $q^n=-(-1)^n$, we obtain
\[
\sum_{k=1}^{n-1} k(-1)^k\exp(k\pi\mathbf{i}/n)
=\frac{-1-(-1)^n+(-1)^{n} n\big(\exp(-\pi\mathbf{i}/n)+1\big)}{4\cos^2\big(\pi/(2n)\big)},
\]
\begin{multline*}
\phantom{uuuu}
\sum_{k=1}^{n-1} k(n-k)(-1)^k\exp(k\pi\mathbf{i}/n)\\
=\frac{2n\cos\big(\pi/(2n)\big)\big((-1)^n-1\big)+2\sin\big(\pi/(2n)\big)\big(1+(-1)^n\big)\mathbf{i}}
{8\cos^3\big(\pi/(2n)\big)}.
\phantom{uuuu}
\end{multline*}
Consequently,
\begin{align*}
\sum_{k=0}^{n}\frac{(-1)^k k\sin(k\pi/n)}{\sin(\pi/n)}&=\frac{-(-1)^n n}{4\cos^2(\pi/(2n))},\\
\sum_{k=0}^{n}(-1)^k k(n-k)\cos(k\pi/n)&=\frac{\left((-1)^n-1\right)n}{4\cos^2(\pi/(2n))},
\end{align*}
which, together with (\ref{eq:suma}), leads to (\ref{sumxx}).
\end{proof}

\section*{Acknowledgements}

The author is grateful to Nobuaki Obata for useful discussions, in particular for pointing out reference~\cite{ruzieh1990},
and also to the anonymous referee for careful reading the paper and useful comments,
in particular for simplifying the proof of Theorem~\ref{theoremfinite}.

\end{document}